\newfont{\teneufm}{eufm10}
\newfont{\seveneufm}{eufm7}
\newfont{\fiveeufm}{eufm5}
\def\bbbc{{\mathchoice {\setbox0=\hbox{$\displaystyle\rm C$}\hbox{\hbox
to0pt{\kern0.4\wd0\vrule height0.9\ht0\hss}\box0}}
{\setbox0=\hbox{$\textstyle\rm C$}\hbox{\hbox
to0pt{\kern0.4\wd0\vrule height0.9\ht0\hss}\box0}}
{\setbox0=\hbox{$\scriptstyle\rm C$}\hbox{\hbox
to0pt{\kern0.4\wd0\vrule height0.9\ht0\hss}\box0}}
{\setbox0=\hbox{$\scriptscriptstyle\rm C$}\hbox{\hbox
to0pt{\kern0.4\wd0\vrule height0.9\ht0\hss}\box0}}}}
\def\bbbq{{\mathchoice {\setbox0=\hbox{$\displaystyle\rm
Q$}\hbox{\raise
0.15\ht0\hbox to0pt{\kern0.4\wd0\vrule height0.8\ht0\hss}\box0}}
{\setbox0=\hbox{$\textstyle\rm Q$}\hbox{\raise
0.15\ht0\hbox to0pt{\kern0.4\wd0\vrule height0.8\ht0\hss}\box0}}
{\setbox0=\hbox{$\scriptstyle\rm Q$}\hbox{\raise
0.15\ht0\hbox to0pt{\kern0.4\wd0\vrule height0.7\ht0\hss}\box0}}
{\setbox0=\hbox{$\scriptscriptstyle\rm Q$}\hbox{\raise
0.15\ht0\hbox to0pt{\kern0.4\wd0\vrule height0.7\ht0\hss}\box0}}}}
\def\bbbt{{\mathchoice {\setbox0=\hbox{$\displaystyle\rm
T$}\hbox{\hbox to0pt{\kern0.3\wd0\vrule height0.9\ht0\hss}\box0}}
{\setbox0=\hbox{$\textstyle\rm T$}\hbox{\hbox
to0pt{\kern0.3\wd0\vrule height0.9\ht0\hss}\box0}}
{\setbox0=\hbox{$\scriptstyle\rm T$}\hbox{\hbox
to0pt{\kern0.3\wd0\vrule height0.9\ht0\hss}\box0}}
{\setbox0=\hbox{$\scriptscriptstyle\rm T$}\hbox{\hbox
to0pt{\kern0.3\wd0\vrule height0.9\ht0\hss}\box0}}}}
\def\bbbs{{\mathchoice
{\setbox0=\hbox{$\displaystyle     \rm S$}\hbox{\raise0.5\ht0\hbox
to0pt{\kern0.35\wd0\vrule height0.45\ht0\hss}\hbox
to0pt{\kern0.55\wd0\vrule height0.5\ht0\hss}\box0}}
{\setbox0=\hbox{$\textstyle        \rm S$}\hbox{\raise0.5\ht0\hbox
to0pt{\kern0.35\wd0\vrule height0.45\ht0\hss}\hbox
to0pt{\kern0.55\wd0\vrule height0.5\ht0\hss}\box0}}
{\setbox0=\hbox{$\scriptstyle      \rm S$}\hbox{\raise0.5\ht0\hbox
to0pt{\kern0.35\wd0\vrule height0.45\ht0\hss}\raise0.05\ht0\hbox
to0pt{\kern0.5\wd0\vrule height0.45\ht0\hss}\box0}}
{\setbox0=\hbox{$\scriptscriptstyle\rm S$}\hbox{\raise0.5\ht0\hbox
to0pt{\kern0.4\wd0\vrule height0.45\ht0\hss}\raise0.05\ht0\hbox
to0pt{\kern0.55\wd0\vrule height0.45\ht0\hss}\box0}}}}
\def\bbbz{{\mathchoice {\hbox{$\sf\textstyle Z\kern-0.4em Z$}}
{\hbox{$\sf\textstyle Z\kern-0.4em Z$}}
{\hbox{$\sf\scriptstyle Z\kern-0.3em Z$}}
{\hbox{$\sf\scriptscriptstyle Z\kern-0.2em Z$}}}}
 \newtheorem{thm}{Theorem}%% [section]
 \newtheorem{cor}[thm]{Corollary}
 \newtheorem{lem}[thm]{Lemma}
  \newtheorem{prob}[thm]{Problem}
 \theoremstyle{definition}
 \theoremstyle{remark}
\def\cA{{\mathcal A}}
\def\cE{{\mathcal E}}
\def\cI{{\mathcal I}}
\def\cN{{\mathcal N}}
\def\cP{{\mathcal P}}
\def\({\left(}
\def\){\right)}
\def\[{\left[}
\def\]{\right]}
\def\<{\langle}
\def\>{\rangle}
\def\Z{\mathbb{Z}}
\def\mand{\qquad\mbox{and}\qquad}
\begin{document}

\title[Quadratic Non-residues in Short Intervals]
{Quadratic Non-residues in Short Intervals}

\author[S. V. Konyagin] {Sergei V.~Konyagin}

\address{Steklov Mathematical Institute,
8, Gubkin Street, Moscow, 119991, Russia}
\email{konyagin@mi.ras.ru}

\author[I. E. Shparlinski] {Igor E. Shparlinski}

\address{Department of Pure Mathematics, University of New South Wales,
Sydney, NSW 2052, Australia}
\email{igor.shparlinski@unsw.edu.au}

\begin{abstract}
%%IS, psi vs. h; metioned Burgess and sieve
We use the Burgess bound and combinatorial sieve 
to obtain an upper bound on the number of primes $p$ in a
dyadic interval $[Q,2Q]$ for which a given interval $[u+1,u+\psi(Q)]$
does not contain a quadratic non-residue modulo $p$. The bound is
nontrivial for any function $\psi(Q)\to\infty$ as $Q\to\infty$.
This is an analogue of the well known estimates on the smallest quadratic
non-residue modulo $p$ on average over primes $p$, which corresponds to
the choice $u=0$.
%We obtain an upper bound on the number of primes $p$ in a
%dyadic interval $[Q,2Q]$ for which a given interval $[u+1,u+h]$
%does not contain a quadratic non-residue modulo $p$. The bound is
%nontrivial for $h\to\infty$ as $Q\to\infty$.
%This is an analogue of the well known estimates on the smallest quadratic
%non-residue modulo $p$ on average over primes $p$, which corresponds to
%the choice $u=0$.
\end{abstract}

\subjclass[2010]{11A15, 11L40}

\keywords{Quadratic non-residues, character sums}

\maketitle

\section{Introduction}

\subsection{Motivation and background}

For a prime $p\ge 3$ we denote by $n(p)$ the smallest quadratic non-residue
modulo $p$. The best known upper bound
$n(p) \le p^{1/4e^{1/2} + o(1)}$ is due to Burgess~\cite{Burg},
while it  is expected that $n(p) = p^{o(1)}$, which is
widely known as a {\it Conjecture of Vinogradov\/}.

Bound of this type, and in fact much more precise,
are also known. For example,
conditionally on the Generalised
Riemann Conjecture, we have $n(p) = O(\log^2 p)$ for any prime $p$, see~\cite[Theorem~13.11]{MoVa}.

Furthermore, unconditionally, using the large sieve method,
 Erd{\H o}s~\cite{Erd} has established that
$$
\frac{1}{\pi(x)} \sum_{p\le x} n(p) \to \sum_{k=1}^{\infty} \frac{p_k}{2^k},
\qquad x \to \infty,
$$
where, as usual $\pi(x)$ denotes the number of primes $p \le x$
and $p_k$ denotes the $k$th prime.
This instantly implies that the inequality
$n(p)\le \psi(p)$ holds  for almost all primes $p$ (that is,
for all but $o(x/\log x)$ primes $p\le x$, as $x \to \infty$),
where $\psi$ is an arbitrary function with
$\psi(z) \to \infty$ as $z \to \infty$.

On the other hand, by a result of Graham and   Ringrose~\cite{GrRi},
there is an absolute constant $C > 0$
such that for infinitely many primes $p$  all
nonnegative integers $z \le C \log p \log \log \log p$
are quadratic residues modulo $p$.

Another {\it Conjecture of Vinogradov\/} is the bound
$d(p) = p^{o(1)}$, where $d(p)$ is the longest sequence
of consecutive quadratic residues modulo $p$.
It seems that this conjecture received less attention than the
one  about the smallest quadratic non-residue.
In particular, the only known result about $d(p)$ is
the bound
$d(p)\le   p^{1/4 + o(1)}$,
which is due to Burgess~\cite{Burg} as well.
It is still unknown
whether the Generalised
Riemann Conjecture or the large sieve method (or any other
standard methods and conjectures) can lead to a better estimate
on $d(p)$ for at least almost all primes.
This naturally leads to the following:

\begin{prob} Assuming  the Generalised
Riemann Conjecture, show that for some constant $\gamma < 1/4$ the bound
$d(p) < p^\gamma$ holds for almost all primes $p$.
\end{prob}

In fact, it is still unknown whether  $d(p) = o(p^{1/4})$ for an
infinite sequence of primes.

Our main goal here is to attract more attention to the function $d(p)$
and also make a modest step towards better understanding
the distribution of quadratic non-residues.

We also denote by $n_k(p)$ the $k$th quadratic non-residue modulo $p$,
and consider the gaps
$\Delta_k(p) = n_{k+1}(p) - n_k(p)$,  $k=1, \ldots, (p-3)/2$.

It is shown in~\cite[Lemma~2]{DES} that for any fixed $\varepsilon > 0$
and $h\ge p^\varepsilon$
$$
\# \{k =1, \ldots, (p-3)/2~:~ \Delta_k(p) \ge h\} \le p^{1/2 + o(1)} h^{-2}.
$$
which, via partial summation, 
%%IS leds
     leads 
to the estimate
$$
S(h,p)  = \sum_{\substack{j=1\\ \Delta_k(p) \ge h}}^{(p-3)/2} \Delta_k(p)
\le p^{1/2 + o(1)} h^{-1}.
$$

We also note that a result of Garaev, Konyagin and Malykhin~\cite[Theorem~2]{GKM},
in particular,
gives an asymptotic formula for the average values of
the $\gamma$-powers of gaps between quadratic residues modulo $p$
for $0 < \gamma < 4$. This can easily be extended to the same
estimate for the gaps  between quadratic non-residues modulo $p$.

%%IS \subsection{Main Result}
     \subsection{Main result}

Let $d_u(p)$ be smallest $h$ such that there exist a quadratic non-residue in the
interval $\cI = [u+1, u+h]$.
Clearly
$$
n(p) = d_u(p) \mand d(p) = \max_{u \in \Z} d_u(p).
$$
So estimating $d_u(p)$ for a given $u$ can be considered as an intermediate
question between estimating $n(p)$ and $d(p)$.

Here we estimate
$d_u(p)$, uniformly over $u$,  for almost all primes $p$.
It is more convenient to work with primes from dyadic
intervals $[Q, 2Q]$.

\begin{thm}
\label{thm:dup} Let $\psi$ be an arbitrary function with
$\psi(z) \to \infty$ as $z \to \infty$.
For any sufficiently large real positive $Q$,
for any  integer $u\le 2Q$, for the set $\cE_u(\psi, Q)$ of primes $p \in [Q, 2Q]$
with
$$
d_u(p) > \psi(p)
$$
we have  $\cE_u(\psi, Q) = o(Q/\log Q)$ uniformly in $u$.
\end{thm}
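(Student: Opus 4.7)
Setting $h=\psi(Q)$ and $\cI=[u+1,u+h]$, I would begin with the identity $\mathbf{1}[\chi_p(n)\ne-1]=(1+\chi_p(n))/2+\mathbf{1}[p\mid n]/2$ where $\chi_p(\cdot)=(\cdot/p)$ is the Legendre symbol. Taking the product over $n\in\cI$ and summing over primes $p\in[Q,2Q]$, and separately handling the at most $h$ primes $p\in[Q,2Q]$ dividing some element of $\cI$, this yields
$$
|\cE_u(\psi,Q)|\le O(h)+\frac{1}{2^h}\sum_{S\subseteq\cI}\sum_{p\in[Q,2Q]}\chi_p(m_S),\qquad m_S:=\prod_{n\in S}n.
$$
I would then partition $S$ according to whether $m_S$ is a perfect square; such subsets form the $\F_2$-subspace $\cS\subseteq 2^\cI$ of size $2^{h-r}$, where $r$ is the $\F_2$-rank of the parity vectors $\{v(n):n\in\cI\}\subseteq\F_2^{\pi(u+h)}$ encoding the parities of prime exponents.

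This is where the combinatorial sieve would do the work of bounding $r$. By standard sieve estimates for smooth numbers in short intervals, the count of $h$-smooth $n\in\cI$ is $O(h\rho(\log(u+h)/\log h))=o(h)$, using that $h=\psi(Q)\to\infty$ while $\log(u+h)\le\log(3Q)$. Each non-$h$-smooth $n\in\cI$ carries a prime factor $\ell>h$ which, since $2\ell>|\cI|$, divides no other $n'\in\cI$; the vectors $v(n)$ for such $n$ are therefore $\F_2$-linearly independent. Hence $r\ge h-o(h)$ and $|\cS|\le 2^{o(h)}$. Since for $S\in\cS$ we have $\sum_p\chi_p(m_S)=\pi([Q,2Q])+O(h)$, the square-subset contribution is at most $2^{-h}|\cS|\cdot(Q/\log Q)(1+o(1))=2^{-(1-o(1))h}\cdot(Q/\log Q)=o(Q/\log Q)$.

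It remains to bound the non-square contribution $2^{-h}\sum_{S\notin\cS}|\sum_p\chi_p(m_S)|$. For $S\notin\cS$, by quadratic reciprocity $\chi_p(m_S)$ is the value at $p$ of a non-principal real Dirichlet character of modulus $O(m_S)$. I would apply Cauchy--Schwarz to pass to the second moment
$$
\sum_{S\subseteq\cI}\Bigl|\sum_{p\in[Q,2Q]}\chi_p(m_S)\Bigr|^2=\sum_{p_1,p_2\in[Q,2Q]}\prod_{n\in\cI}(1+\chi_{p_1}(n)\chi_{p_2}(n)),
$$
whose diagonal $p_1=p_2$ contributes $\sim 2^hQ/\log Q$, and whose off-diagonal involves the non-principal real characters $\chi_{p_1}\chi_{p_2}$ of modulus $\le 4Q^2$, to be controlled through Burgess's character-sum inequality on the interval $\cI$. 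The main obstacle will be this off-diagonal estimate: Burgess applied to an interval of length $h=\psi(Q)$ as slow as one wishes gives only modest individual savings, so the argument must invoke the sieve-theoretic rarity of pairs $(p_1,p_2)$ for which $\prod_n(1+\chi_{p_1}(n)\chi_{p_2}(n))$ approaches its maximum $2^h$. The same rank bound $r\ge h-o(h)$ restricts how many distinct products $m_S$ are actually relevant, and this is the leverage that---combined with Burgess---yields an off-diagonal term of size $o(2^hQ/\log Q)$ and hence the desired $|\cE_u(\psi,Q)|=o(Q/\log Q)$ uniformly in $u\le 2Q$.
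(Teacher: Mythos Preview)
The decisive gap is the non-square contribution, which you yourself flag as ``the main obstacle'' but do not actually resolve. After Cauchy--Schwarz you face the off-diagonal
\[
\sum_{\substack{p_1,p_2\in[Q,2Q]\\p_1\ne p_2}}\ \prod_{n\in\cI}\bigl(1+\chi_{p_1}(n)\chi_{p_2}(n)\bigr),
\]
and you need this to be $o\bigl(2^h(Q/\log Q)^2\bigr)$. Each product lies in $[0,2^h]$ and vanishes precisely when some $n\in\cI$ has Jacobi symbol $(n/p_1p_2)=-1$; showing that this happens for almost all pairs $(p_1,p_2)$ is essentially the theorem you are trying to prove, now for moduli $p_1p_2$ instead of $p$, so the argument is circular. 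Burgess on $\cI$ is useless here since the interval has length $h=\psi(Q)$ against a modulus $\asymp Q^2$. Your remarks about ``sieve-theoretic rarity of pairs'' and the rank bound do not supply an argument: the rank only limits the number of square classes among the $m_S$, not the size of any character sum over primes. (A secondary issue: the assertion that $\cI$ contains $o(h)$ many $h$-smooth integers is not a ``standard sieve estimate''---short-interval smooth-number bounds of this strength are not known when $h$ grows as slowly as an arbitrary $\psi(Q)\to\infty$.)

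The paper closes exactly this gap by a different mechanism. It takes the second moment $\sum_{p}\bigl|\sum_{n\in\cN}(n/p)\bigr|^2$ over a set $\cN\subset\cI$ lying in a fixed class mod~$4$ (so that $n_1n_2\equiv 1\pmod 4$ and reciprocity carries no sign), and then \emph{enlarges} the outer sum from primes $p$ to all $m\in\cP(\eta,2Q)$, the integers $\le 2Q$ with no prime factor $\le(2Q)^\eta$. After reciprocity this becomes $\sum_{n_1,n_2\in\cN}\sum_{m\in\cP(\eta,2Q)}(m/n_1n_2)$: now the inner variable $m$ ranges over a \emph{long} interval of length $\asymp Q$ against a modulus $n_1n_2\le 5Q^2$, and Burgess with $\nu=3$ combined with the fundamental lemma of the combinatorial sieve (this is where the sieve actually enters) gives genuine cancellation. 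The diagonal contribution from perfect-square $n_1n_2$ is then handled by an elementary case split on $h\gtrless u^{1/2}/\log u$, not by smooth-number counts. This enlarge-then-flip step is the idea missing from your proposal.
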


\section{Preliminaries}

\subsection{General notation}

Throughout the paper, the implied constants in the symbols ``$O$'',
``$\ll$'' and ``$\gg$'' may occasionally, where obvious,
depend on the real positive parameters $\varepsilon$ and $\eta$
and are absolute otherwise.  We recall that
the expressions $A=O(B)$, $A \ll B$ and  $B\gg A$
are each equivalent to the
statement that $|A|\le cB$ for some constant $c$.

We always use the letter $p$, with or without subscripts, to denote a prime number,
while $k$, $m$, $n$ and $q$ always denote positive integer numbers.

As usual, we use $\varphi(k)$ is the Euler function.

\subsection{Burgess bound}

We now recall the Burgess bound for some of multiplicative
characters modulo arbitrary integers, see~\cite[Theorems~12.5 and~12.6]{IwKow}.
In fact we only need it for sums of Jacobi symbols.

\begin{lem}
\label{lem:Burg} For any integers $q \ge M \ge 1$, where
$q\ge 2$ is not a perfect square, we have
$$
\left|\sum_{m\le M} \(\frac{m}{q}\) \right| \le M^{1-1/\nu} q^{(\nu+1)/4\nu^2 + o(1)},
$$
with $\nu =1,2,3$.
\end{lem}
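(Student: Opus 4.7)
For $\nu = 1$ the claim reduces to the Pólya--Vinogradov inequality: since $q$ is not a perfect square, the Kronecker--Jacobi symbol $\(\frac{\cdot}{q}\)$ is a non-principal real Dirichlet character modulo a divisor of $q$, hence $\left|\sum_{m \le M}\(\frac{m}{q}\)\right| \ll q^{1/2}\log q = q^{1/2 + o(1)}$.

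For $\nu \in \{2,3\}$, the plan is the classical Burgess shift-and-multiplicativity argument. Introduce auxiliary parameters $A, B \ge 1$ to be optimized at the end. For any $a \in [1,A]$ and any prime $b \in (B, 2B]$ coprime to $q$, the trivial inequality $\bigl|\sum_{m \le M}\(\frac{m}{q}\) - \sum_{m \le M}\(\frac{m+ab}{q}\)\bigr| \le 2ab$, averaged over all such pairs $(a,b)$, gives
\[
A B \,\Bigl|\sum_{m \le M}\(\tfrac{m}{q}\)\Bigr| \ll A^2B^2 + \sum_{a,b}\Bigl|\sum_{m \le M}\(\tfrac{m+ab}{q}\)\Bigr|.
\]
Exploit complete multiplicativity and real-valuedness of the Jacobi symbol via $\(\frac{m+ab}{q}\) = \(\frac{b}{q}\)\(\frac{\bar b m + a \bmod q}{q}\)$, where $\bar b b \equiv 1 \pmod q$, to reindex the inner sum as $\sum_{n \bmod q} T(a,n)\(\frac{n}{q}\)$ with nonnegative weights $T(a,n)$ satisfying $\sum_n T(a,n) \ll MB$. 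Apply Hölder's inequality with exponent $2\nu$ to isolate the character piece from the weights.

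This reduces matters to controlling the $2\nu$-th moment
\[
\Sigma_{2\nu} = \sum_{n \bmod q}\Bigl|\sum_{b \in (B,2B]}\(\tfrac{n+ab}{q}\)\Bigr|^{2\nu}.
\]
Expanding the power produces, for each tuple $(b_1,\dots,b_{2\nu})$, a complete character sum $\sum_{n \bmod q}\(\frac{f(n)}{q}\)$ with $f(x) = \prod_i (x + ab_i)$ of degree $2\nu$. For squarefree $q = \prod p_j$, the Chinese Remainder Theorem factors this as a product of Legendre sums modulo each $p_j$, and the Weil bound $\bigl|\sum_{n \bmod p}(f(n)/p)\bigr| \le (2\nu - 1)\sqrt{p}$ applies whenever $f$ is not a perfect square modulo $p_j$. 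Off-diagonal tuples then contribute $O(B^{2\nu}q^{1/2+o(1)})$, while the diagonal tuples (those making $f$ a global square) number $O(B^\nu)$ and contribute at most $O(B^\nu q)$. For general non-square $q$ write $q = q_0 q_1^2$ with $q_0 \ge 2$ squarefree and reduce to the squarefree case at the cost of a harmless factor $q_1^{o(1)}$. Combining the moment bound with Hölder and optimizing $A \approx M q^{-(\nu+1)/(4\nu^2)}$, $B \approx q^{(\nu+1)/(4\nu^2)}$ produces the claimed estimate $M^{1-1/\nu}q^{(\nu+1)/(4\nu^2)+o(1)}$.

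The main obstacle is the moment bound $\Sigma_{2\nu}$ for composite modulus $q$: the Weil bound is native to prime modulus, so extending it to $q$ that is not a perfect square requires the CRT decomposition combined with a careful combinatorial count of diagonal tuples (those $(b_1,\dots,b_{2\nu})$ for which $f$ is a square modulo every $p_j \mid q$). The non-square hypothesis on $q$ is essential here: it forces at least one prime $p_j$ to appear to an odd power in the squarefree kernel, which in turn prevents the diagonal contribution from overwhelming the Weil savings.
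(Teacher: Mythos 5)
The paper offers no proof of this lemma: it is quoted verbatim from Iwaniec--Kowalski~\cite[Theorems~12.5 and~12.6]{IwKow}, so there is no internal argument to compare against, and what you have written is an outline of the standard Burgess proof of the cited result. The $\nu=1$ case is fine (non-squareness of $q$ is exactly what makes $\left(\frac{\cdot}{q}\right)$ non-principal, so P\'olya--Vinogradov applies). For $\nu=2,3$, however, the sketch has two genuine gaps, and they sit precisely at the points where the composite-modulus case is hard. First, in the moment bound $\Sigma_{2\nu}$ you count as ``diagonal'' only the tuples for which $f(x)=\prod_i(x+ab_i)$ is a \emph{global} square, estimating them as $O(B^\nu)$. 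But after the CRT factorisation the complete sum $\sum_{n \bmod q}\left(\frac{f(n)}{q}\right)$ is large whenever $f$ is a square modulo \emph{each} prime $p_j$ dividing the squarefree kernel separately, and for a modulus with many small prime factors the shifts $ab_i$ can collide modulo small $p_j$ without being equal in $\Z$. These locally diagonal tuples are far more numerous than $O(B^\nu)$; controlling their total contribution is exactly why the theorem in~\cite{IwKow} is restricted to $\nu\le 3$ or to cube-free conductors. You name this difficulty in your closing paragraph, but the displayed estimate does not address it, so the moment bound is not actually established. Second, the H\"older step as described, using only $\sum_n T(a,n)\ll MB$, does not produce the exponent $(\nu+1)/4\nu^2$: one needs the three-factor H\"older inequality together with a bound on the second moment $\sum_n T(n)^2$, which reduces to counting solutions of $b_1m_2\equiv b_2m_1 \pmod q$ and is where the constraint $M\le q$ and the primality of the $b$'s actually enter. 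The final assertion that the non-square hypothesis ``prevents the diagonal contribution from overwhelming the Weil savings'' is also not quite the right diagnosis; its real role is to guarantee a nontrivial conductor.

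For this particular lemma there is a cleaner route, which the paper itself points out in its Comments section: writing $q=q_0q_1^2$ with $q_0\ge2$ squarefree, one has $\left(\frac{m}{q}\right)=\left(\frac{m}{q_0}\right)$ when $\gcd(m,q_1)=1$ and $0$ otherwise, so a M\"obius inversion over the $q^{o(1)}$ squarefree divisors of $q_1$ reduces the entire statement to a Burgess bound for a squarefree (hence cube-free) conductor, where the result holds for every $\nu\ge1$. Performing this reduction at the outset, rather than inside the moment estimate as you propose, sidesteps the general-modulus difficulties --- though the locally diagonal count for squarefree $q_0$ with many prime factors still has to be carried out and is the genuinely nontrivial part of the proof.
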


In particular, Lemma~\ref{lem:Burg}  implies:

\begin{cor}
\label{lem:BurgSimpl}
 For any $\varepsilon > 0$ there exists some $\delta> 0$ such that
for any integers
 $M \ge  q^{1/3+\varepsilon}$,
where $q\ge 2$ is not a perfect square, we have
$$
\left|\sum_{m\le M} \(\frac{m}{q}\) \right| \le M^{1-\delta}
$$
\end{cor}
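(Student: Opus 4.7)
The plan is to derive this corollary directly from Lemma~\ref{lem:Burg} by choosing the parameter $\nu$ optimally for the range $M \ge q^{1/3+\varepsilon}$. A quick look at the three possibilities $\nu = 1, 2, 3$ in Lemma~\ref{lem:Burg} shows that $\nu=1$ (which would need essentially $M \ge q^{1/2+\varepsilon}$) and $\nu=2$ (which would need essentially $M \ge q^{3/8+\varepsilon}$) cannot cover the regime $M \ge q^{1/3+\varepsilon}$, so I must take $\nu = 3$. This is natural because, as the classical Burgess exponents $(\nu+1)/(4\nu^2)$ decrease with $\nu$, the hypothesis on $M$ becomes weaker, and $\nu=3$ precisely matches the threshold $q^{1/3}$.

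The second step is the algebra. With $\nu = 3$, Lemma~\ref{lem:Burg} gives
$$
\left|\sum_{m\le M}\left(\frac{m}{q}\right)\right| \le M^{2/3}\,q^{1/9 + o(1)}.
$$
The hypothesis $M \ge q^{1/3+\varepsilon}$ is equivalent to $q \le M^{3/(1+3\varepsilon)}$, hence $q^{1/9} \le M^{1/(3(1+3\varepsilon))}$. Substituting yields
$$
\left|\sum_{m\le M}\left(\frac{m}{q}\right)\right| \le M^{\,2/3 \,+\, 1/(3(1+3\varepsilon)) \,+\, o(1)} = M^{\,1 \,-\, \varepsilon/(1+3\varepsilon) \,+\, o(1)}.
$$

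The third step is to choose $\delta$. Setting $\delta = \varepsilon/(2(1+3\varepsilon))$ leaves a buffer of the same size to absorb the $o(1)$ term, which tends to $0$ as $q \to \infty$, so the bound $M^{1-\delta}$ holds once $q$ (and consequently $M \ge q^{1/3+\varepsilon}$) is sufficiently large in terms of $\varepsilon$.

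The only mildly annoying issue is handling bounded $q$, where the $o(1)$ in Lemma~\ref{lem:Burg} is not meaningful; but for such $q$ the Jacobi symbol $(\cdot/q)$ is a non-principal character (as $q \ge 2$ is not a square), so its partial sums are $O_q(1)$ by periodicity, which is dominated by $M^{1-\delta}$ once $M$ is large, and the finitely many remaining $(q,M)$ pairs can be absorbed into the implied constant (or equivalently into a slightly smaller $\delta$). There is no real obstacle here; the proof is essentially the substitution above.
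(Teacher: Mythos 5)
Your deduction is correct and is exactly the intended one: the paper gives no written proof of the corollary, simply asserting it follows from Lemma~\ref{lem:Burg}, and the route you take --- $\nu=3$, exponent $(\nu+1)/(4\nu^2)=1/9$, substitution of $q\le M^{3/(1+3\varepsilon)}$ to get the exponent $1-\varepsilon/(1+3\varepsilon)+o(1)$, and then halving the saving to absorb the $o(1)$ --- is the standard and essentially unique way to fill it in. Your handling of bounded $q$ (non-principal Jacobi symbol since $q$ is not a perfect square, hence bounded partial sums) is also fine.
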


\subsection{Integers with a prescribed
multiplicative structure}

Now given some $\eta> 0$ we denote by $\cP(\eta,M)$ the set of positive
integers $m\le M$ which do not have prime divisors $p \le M^\eta$.
It is well known that  for any fixed $\eta > 0$ we have
\begin{equation}
\label{eq:Card P}
 |\cP(\eta, M)| \le c_0  \frac{M}{\eta \log M}
\end{equation}
for some absolute constants $c_0> 0$,
see, for example,~\cite[Section~III.6.2, Theorem~3]{Ten}.

We now recall the so-called {\it fundamental
lemma of the combinatorial sieve\/}, see, for example,~\cite[Section~I.4.2, Theorem~3]{Ten}.

For a finite set of integers $\cA$ and a set of primes $\cP$
we denote
$$
P(y) = \prod_{\substack{p \in \cP\\p\le y}} p
$$
and
$$
S(\cA, \cP, y) = \# \{a\in \cA~:~ \gcd(a,P(y))=1\}.
$$

\begin{lem}\label{lem:FL}
Assume that for  a finite set of integers $\cA$ and a set of primes $\cP$
%%IS there exists 
there exist
a non-negative multiplicative function $\omega(d)$, a real $X$ and
positive constants $\alpha$ and $A$ such that:
\begin{itemize}
\item for any $d\mid P(y)$, we have
$$\#  \{a\in \cA~:~ a \equiv 0 \pmod d\} = X\frac{\omega(d)}{d} + R_d;$$
\item for any real $v > w \ge 2$ we have
$$
\prod_{w\le p \le v} \(1 -\frac{\omega(p)}{p} \) < \(\frac {\log v}{\log w}\)^\alpha \(1 + \frac{A}{\log w}\).
$$
\end{itemize}
Then uniformly  for $\cA$, $X$, $y$ and $u\ge 1$
$$
S(\cA, \cP, y) = X \prod_{p\mid P(y)}\(1 -\frac{\omega(p)}{p} \) \(1 + O(u^{-u/2})\)
+ O\(\sum_{\substack{d \mid P(y)\\ d \le y^u}} |R_d|\).
$$
\end{lem}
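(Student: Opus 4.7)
Since $\psi(z) \to \infty$, I may replace $\psi$ by a single function $H = H(Q)$ tending to infinity with $H(Q) \le \psi(p)$ for every prime $p \in [Q, 2Q]$, and reduce to showing that the set $\cE'_u = \{p \in [Q, 2Q]\text{ prime} : [u+1, u+H] \text{ contains no quadratic non-residue mod } p\}$ satisfies $|\cE'_u| = o(Q/\log Q)$ uniformly in $u$, since $\cE_u(\psi, Q) \subseteq \cE'_u$.

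Fix a small $\eta > 0$ and apply Lemma~\ref{lem:FL} to $\cA = [u+1, u+H]$ with $\omega \equiv 1$ and sifting primes $q \le H^\eta$; for a suitable value of the fundamental-lemma parameter, the main term $\asymp H/\log H$ dominates the remainder. After discarding the negligibly few non-squarefree members of the sifted set, I obtain a set $\cF \subseteq [u+1, u+H]$ of squarefree integers with all prime factors exceeding $H^\eta$ and with $|\cF| \gg H/\log H \to \infty$. Squarefreeness ensures that $m_1 m_2$ is a perfect square for $m_1, m_2 \in \cF$ only when $m_1 = m_2$. For $p \in \cE'_u$, every $m \in \cF$ with $\gcd(m, p) = 1$ is a quadratic residue mod $p$, and at most $O(1)$ of the $m \in \cF$ are divisible by $p$, so $T(p) := \sum_{m \in \cF} (m/p) \ge |\cF| - O(1)$. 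Cauchy--Schwarz gives
\[
|\cE'_u| \, (|\cF| - O(1))^2 \le \sum_{p \in [Q, 2Q]} T(p)^2 = \sum_{m_1, m_2 \in \cF} \sum_{p \in [Q, 2Q]} \left(\frac{m_1 m_2}{p}\right).
\]
The diagonal $m_1 = m_2$ contributes $\ll |\cF| \cdot Q/\log Q$.

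For each off-diagonal pair, $N := m_1 m_2 \le 9Q^2$ is non-square, and I would bound $\sum_{p \in [Q, 2Q]} (N/p)$ by $Q^{1-\delta}$ for some $\delta > 0$ as follows. Quadratic reciprocity rewrites the sum (up to signs depending on residues mod $4$) as a character sum over primes of a non-principal Dirichlet character of modulus at most $4|N| \le 36 Q^2$. A second application of Lemma~\ref{lem:FL}, now with $\cA = [Q, 2Q]$ sifted by primes $\le Q^\alpha$ for a small $\alpha$, detects primes in $[Q, 2Q]$ and expresses the prime character sum as a truncated linear combination of inner character sums $\sum_{m \in [Q/d, 2Q/d]} (m/N)$ plus a controlled sieve remainder. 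Corollary~\ref{lem:BurgSimpl} bounds each inner sum by $(Q/d)^{1-\delta}$ provided $Q/d \ge N^{1/3 + \varepsilon}$, which for $N \le 9Q^2$ forces $d \le Q^{1/3 - 2\varepsilon}$; summing over the admissible $d$ yields the off-diagonal total $\ll |\cF|^2 Q^{1-\delta}$. Combining with the diagonal,
\[
|\cE'_u| \ll \frac{Q}{|\cF| \log Q} + Q^{1-\delta} = o(Q/\log Q),
\]
because $|\cF| \to \infty$.

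The main obstacle is executing the off-diagonal step cleanly. Burgess bounds character sums over integer intervals, not over primes, so it must be coupled with the combinatorial sieve to detect primes in $[Q, 2Q]$; the sieve truncation must be small enough that Burgess applies to every inner character sum (forcing $d \le Q^{1/3 - \varepsilon}$ when $N \sim Q^2$), yet large enough to keep the sieve remainder below $Q/\log Q$. The uniformity in $u$ is the pressure point: the case $u \asymp Q$ pushes $N$ up to $Q^2$ and leaves the least room to maneuver, so the parameters $\eta$ (controlling $|\cF|$) and the sieve truncation (controlling the off-diagonal) must be chosen compatibly with this worst case.
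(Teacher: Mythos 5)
Your proposal does not address the statement it is supposed to prove. The statement is Lemma~\ref{lem:FL}, the \emph{fundamental lemma of the combinatorial sieve}: a general assertion that for any finite set $\cA$ and sifting range $y$, under the two axioms on $\omega$, one has $S(\cA,\cP,y) = X\prod_{p\mid P(y)}(1-\omega(p)/p)(1+O(u^{-u/2})) + O(\sum_{d\mid P(y),\, d\le y^u}|R_d|)$. A proof of this would proceed through sieve machinery (e.g.\ Buchstab iteration with truncation, or the Brun/Rosser--Iwaniec weights, tracking the error $u^{-u/2}$); the paper itself gives no proof and simply cites~\cite[Section~I.4.2, Theorem~3]{Ten}. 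What you have written instead is a proof sketch of Theorem~\ref{thm:dup}, the paper's main result about quadratic non-residues in short intervals. Nothing in your argument establishes, or even engages with, the asymptotic for $S(\cA,\cP,y)$; you \emph{invoke} Lemma~\ref{lem:FL} twice as a black box, which is circular if the lemma is what you are meant to prove.

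For what it is worth, even read as a proof of Theorem~\ref{thm:dup} your sketch has two substantive gaps. First, you discard the ``negligibly few'' non-squarefree members of the sifted subset of $[u+1,u+H]$, but when $u$ is much larger than $H^2$ the integers $n\in[u+1,u+H]$ divisible by $p^2$ for some prime $p$ up to $(u+H)^{1/2}$ cannot be controlled by the trivial bound $\sum_p(H/p^2+1)$, whose $+1$ terms total about $u^{1/2}/\log u \gg H$; this is precisely why the paper imports Tolev's result on squarefree numbers in short intervals and splits into the cases $h\ge u^{1/2}/\log u$ and $h<u^{1/2}/\log u$ (handling the second case by a gcd argument rather than by squarefreeness). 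Second, your off-diagonal step proposes to bound $\sum_{p\in[Q,2Q]}(N/p)$ over \emph{primes} by combining the fundamental lemma with Burgess, but the fundamental lemma counts sifted integers and does not by itself convert a signed character sum over primes into character sums over intervals. The paper avoids this entirely by using positivity: since the summands $|\sum_n(n/p)|^2$ are nonnegative and every prime $p\in[Q,2Q]$ lies in $\cP(\eta,2Q)$, one may enlarge the outer sum to all of $\cP(\eta,2Q)$ before expanding, and then apply Corollary~\ref{cor:Char P} after quadratic reciprocity.
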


We also need the following well-known statement which follows from
the standard inclusion-exclusion argument and the classical bound on the
number of integer divisors of $q$.

\begin{lem}
\label{lem:Erat} For any integers $q \ge M \ge 1$,  we have
$$
\# \left\{1 \le m \le M~:~ \gcd(m,q) = 1\right\} = \frac{\varphi(q)}{q} M + O(q^{o(1)}).
$$
\end{lem}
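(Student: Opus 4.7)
The plan is to apply the standard M\"obius-function approach to coprimality and then absorb the error into the divisor bound. First I would detect the coprimality condition via
$$
\sum_{d\mid \gcd(m,q)} \mu(d) = \begin{cases} 1, & \gcd(m,q)=1,\\ 0, & \text{otherwise,} \end{cases}
$$
and interchange the order of summation to rewrite the quantity in question as
$$
\sum_{d\mid q} \mu(d) \left\lfloor M/d \right\rfloor.
$$

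Next I would split $\lfloor M/d\rfloor = M/d + O(1)$. The resulting main term is
$$
M \sum_{d\mid q} \frac{\mu(d)}{d} = M \prod_{p\mid q}\left(1-\frac{1}{p}\right) = \frac{\varphi(q)}{q} M,
$$
by the Euler product identity for $\varphi$. The error contribution is bounded by $\sum_{d\mid q} |\mu(d)| \le \tau(q)$, where $\tau(q)$ is the number of positive divisors of $q$; note that terms with $d>M$ are handled uniformly because $\lfloor M/d\rfloor=0$ and $M/d<1$ there, so each such term is $O(1)$ as well.

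Finally I would invoke the classical divisor bound $\tau(q) = q^{o(1)}$ as $q\to\infty$ (a standard consequence of unique factorization; see any elementary analytic number theory text) to conclude that the total error is $O(q^{o(1)})$, with absolute implied constant once $q$ exceeds any fixed threshold. For small $q$ the bound is trivial since then the left-hand side and main term are themselves $O(1)$. There is really no obstacle here: every step is routine, and the only nontrivial ingredient is the divisor bound, which is precisely what the statement's parenthetical remark points to.
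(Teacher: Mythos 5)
Your proof is correct and is exactly the argument the paper has in mind: the lemma is stated there without a written proof, accompanied only by the remark that it "follows from the standard inclusion--exclusion argument and the classical bound on the number of integer divisors of $q$," which is precisely the M\"obius-inversion computation and the divisor bound $\tau(q)=q^{o(1)}$ that you supply.
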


The following asymptotic formula for the number of square-free integers
in a short interval is a very special case of a much more general
result of Tolev~\cite[Theorem~1.3]{Tol} (which we apply with $r=2$, $l_1=1$, $l_2=2$),
which in turn extends and
generalises a result of Filaseta and
Trifonov~\cite{FiTr}.

\begin{lem}
\label{lem:SF} For any fixed $\varepsilon> 0$
and real $h \ge u^{1/5+\varepsilon}$,
the interval $[u+1,u+h]$ contains $\(A+ o(1)\)h$
square-free integers $n$ for which $n+1$ is also square-free,
where
$$
A = \prod_{p~\text{prime}}\(1 - \frac{2}{p^2}\).
$$
\end{lem}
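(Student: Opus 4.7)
The plan is to detect the twin square-free condition via Möbius inversion and reduce the lemma to a short-interval estimate on large square divisors, for which I would invoke the Filaseta--Trifonov method in the twin form due to Tolev. Starting from
$$
N(u,h) := \sum_{u<n\le u+h}\mu^2(n)\mu^2(n+1) = \sum_{d_1,d_2\ge 1} \mu(d_1)\mu(d_2)\,\#\{n\in(u,u+h]: d_1^2\mid n,\ d_2^2\mid n+1\},
$$
the coprimality $\gcd(n,n+1)=1$ restricts nonzero contributions to pairs with $\gcd(d_1,d_2)=1$, and the Chinese Remainder Theorem evaluates the inner count as $h/(d_1 d_2)^2 + O(1)$.

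I would then fix a slowly growing parameter $y$ with $y=o(h^{1/2})$ and split the double sum at $\max(d_1,d_2)\le y$. The small-divisor range, using the Euler product identity
$$
\sum_{\substack{d_1,d_2\ge 1\\\gcd(d_1,d_2)=1}} \frac{\mu(d_1)\mu(d_2)}{(d_1 d_2)^2} = \prod_p\left(1-\frac{2}{p^2}\right) = A,
$$
contributes
$$
Ah + O(h/y) + O(y^2) = (A+o(1))h,
$$
since both $h/y=o(h)$ and $y^2=o(h)$.

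The main obstacle, and what forces the hypothesis $h\ge u^{1/5+\varepsilon}$, is to show that the tail from $\max(d_1,d_2)>y$ contributes only $o(h)$. This reduces to bounding the number of $n\in(u,u+h]$ for which $n$ or $n+1$ admits a square divisor $d^2$ with $d>y$. The trivial estimate
$$
\sum_{y<d\le(u+h)^{1/2}}\left(\frac{h}{d^2}+1\right)\ll \frac{h}{y} + (u+h)^{1/2}
$$
is only useful when $h\gg u^{1/2}$, far from the target. To close this gap I would appeal to the Filaseta--Trifonov technique, which uses divided-difference (finite-difference) estimates for the function $d\mapsto \lfloor (u+h)/d^2\rfloor-\lfloor u/d^2\rfloor$ to exploit cancellation among large squarefull divisors in the short interval $[u+1,u+h]$, producing a saving of the form $O(h^{1-\delta})$ for some $\delta=\delta(\varepsilon)>0$. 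The simultaneous (twin) version handling both $n$ and $n+1$ is precisely the content of Tolev's theorem with $r=2$, $\ell_1=1$, $\ell_2=2$. Combining this estimate with the main term yields $N(u,h)=(A+o(1))h$, as required.
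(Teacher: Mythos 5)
Your proposal is correct and follows essentially the same route as the paper: the paper gives no independent proof but simply cites Tolev's Theorem~1.3 with $r=2$, $l_1=1$, $l_2=2$ (noting it extends Filaseta--Trifonov), and your Möbius-inversion skeleton with the Euler product $\prod_p(1-2/p^2)$ is the standard frame whose only nontrivial ingredient --- the bound on integers in $[u+1,u+h]$ divisible by a large square --- you likewise delegate to Tolev's twin-version of the Filaseta--Trifonov divided-difference method. Both arguments therefore rest on exactly the same external input.
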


\begin{cor}
\label{cor:SF} For any fixed $\varepsilon> 0$
and real $u\ge h \ge u^{1/5+\varepsilon}$,
the interval $[u+1,u+h]$ contains at least $\(A+ o(1)\)h$
odd square-free integers $n$.
\end{cor}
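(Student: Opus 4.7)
The plan is to derive Corollary~\ref{cor:SF} from Lemma~\ref{lem:SF} by mapping each square-free pair to its unique odd element. First I would apply Lemma~\ref{lem:SF} to the interval $[u+1,u+h]$ (the hypothesis $h \ge u^{1/5+\varepsilon}$ matches) to produce a set $T \subseteq [u+1,u+h]$ of size $(A+o(1))h$ consisting of integers $n$ such that both $n$ and $n+1$ are square-free.

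For each $n \in T$, exactly one of $n, n+1$ is odd; call it $\phi(n)$. By construction $\phi(n)$ is odd, square-free, and lies in $[u+1, u+h+1]$. The main point is that $\phi$ is injective. Since both $n$ and $n+1$ are square-free, neither is divisible by $4$, so $n \equiv 1 \pmod 4$ or $n \equiv 2 \pmod 4$. In the former case $\phi(n) = n \equiv 1 \pmod 4$; in the latter $\phi(n) = n+1 \equiv 3 \pmod 4$. The two subcases therefore have disjoint images modulo $4$, and on each subcase $\phi$ is a translation, so $\phi$ is globally injective.

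Injectivity gives at least $(A+o(1))h$ odd square-free integers in $[u+1,u+h+1]$, and discarding the single endpoint $u+h+1$ if necessary yields at least $(A+o(1))h$ odd square-free integers in $[u+1,u+h]$, as required. The only nontrivial step is the injectivity claim, which amounts to the observation that two consecutive even integers cannot both be square-free, so an odd value $m$ cannot be the odd member of both $(m-1,m)$ and $(m,m+1)$; once this is noted, the rest is bookkeeping, and the overflow by one at the right endpoint is absorbed into the $o(h)$ error term.
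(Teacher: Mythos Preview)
Your argument is correct. The paper states Corollary~\ref{cor:SF} without proof, treating it as an immediate consequence of Lemma~\ref{lem:SF}; your injectivity argument via residues modulo~$4$ (equivalently, the observation that two consecutive even integers cannot both be square-free) is exactly the natural way to spell out that deduction, and the loss of the single endpoint $u+h+1$ is indeed absorbed into the $o(h)$ term.
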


Note, that Corollary~\ref{cor:SF} is much stronger than what
we actually need. Namely, any result with $\alpha < 1/2$ instead
of $1/5$ and arbitrary $A> 0$ is sufficient for our purposes.

\subsection{Character sums with integers from  $\cP(\eta,M)$}

We now consider the sets
$$
\cP_\pm (\eta,M,q)= \left\{m \in \cP(\eta,M)~:~ \(\frac{m}{q}\) =\pm 1\right\}.
$$

\begin{lem}
\label{lem:Ppm}
 For any $\varepsilon > 0$ there exists some  $\eta_0> 0$ such that
%%IS for any positive $\eta < \eta_0$,  integers
     for any positive $\eta < \eta_0$ and integers
$M \ge  q^{1/3+\varepsilon}$,
where $q\ge 2$ is not a perfect square, we have
$$
\left|\cP_\pm (\eta,M,q)  -
\frac{1}{2} M \prod_{p\le M^\eta} \(1- \frac{1}{p}\) \right| \le
C  \eta^{\eta^{-1/2}/4-1} \frac{M}{\log M} +
O\(M^{1-\eta}\),
$$
where $C$  is an absolute constant.
\end{lem}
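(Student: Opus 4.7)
The plan is to recast $\cP_\pm(\eta, M, q)$ as a sieve problem for the sets $\cA_\pm = \{1 \le m \le M : (m/q) = \pm 1\}$, so that $\cP_\pm(\eta, M, q) = S(\cA_\pm, \cP, M^\eta)$ with $\cP$ the set of all primes, and then invoke the fundamental lemma (Lemma~\ref{lem:FL}), using Corollary~\ref{lem:BurgSimpl} to control the residues. For a squarefree $d$ with $d \mid P(M^\eta)$ and $\gcd(d,q)=1$ (the Jacobi symbol vanishes otherwise), writing $m=dk$ and using multiplicativity of $(\,\cdot\,/q)$ gives
$$
\#\{m \in \cA_\pm : d \mid m\} = \frac{1}{2}\#\{k \le M/d : \gcd(k,q)=1\} \pm \frac{1}{2}\(\frac{d}{q}\)\sum_{k \le M/d}\(\frac{k}{q}\).
$$
Lemma~\ref{lem:Erat} evaluates the first count as $(\varphi(q)/q)(M/d) + O(q^{o(1)})$, while Corollary~\ref{lem:BurgSimpl} bounds the character sum by $(M/d)^{1-\delta}$ for some $\delta = \delta(\varepsilon')>0$ whenever $M/d \ge q^{1/3+\varepsilon'}$ for a fixed $\varepsilon' \in (0,\varepsilon)$.

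This sets up Lemma~\ref{lem:FL} with $X = M\varphi(q)/(2q)$, the multiplicative density $\omega(d) = \mathbf{1}_{\gcd(d,q)=1}$ (which satisfies the Mertens-type condition with dimension $\alpha = 1$), and remainder $|R_d| \ll q^{o(1)} + (M/d)^{1-\delta}$. I would apply the lemma with $y = M^\eta$ and sieve parameter $u = \eta^{-1/2}$, chosen precisely so that $u^{-u/2} = \eta^{\eta^{-1/2}/4}$, which is exactly the shape of the first error term in the conclusion. Setting $\eta_0 = \((\varepsilon-\varepsilon')/(1/3+\varepsilon)\)^2$ then guarantees $y^u = M^{\sqrt{\eta}} \le M/q^{1/3+\varepsilon'}$ uniformly in $q$ for every $\eta < \eta_0$, so that Corollary~\ref{lem:BurgSimpl} applies throughout the divisor range of the sieve.

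Factoring $\varphi(q)/q = \prod_{p\mid q}(1-1/p)$ back into the sieve's Euler product reshapes the main term as $\frac{M}{2}\prod_{p \le M^\eta}(1-1/p) \cdot \prod_{p\mid q,\, p>M^\eta}(1-1/p)$, and the trailing product equals $1 + O(\log q/M^\eta)$; after the Mertens estimate $\prod_{p\le M^\eta}(1-1/p) \asymp 1/(\eta\log M)$, this discrepancy is absorbed into an $O(M^{1-\eta})$ term. The principal sieve error $X\prod(1-\omega(p)/p)\cdot u^{-u/2}$ is then of order $\eta^{\eta^{-1/2}/4-1}\,M/\log M$, matching the leading error in the statement, while the divisor remainder
$$
\sum_{\substack{d\mid P(M^\eta) \\ d \le M^{\sqrt{\eta}}}}|R_d| \ll M^{\sqrt{\eta}+o(1)} + M^{1-\delta(1-\sqrt{\eta})}
$$
is $O(M^{1-\eta})$ once $\eta_0$ is chosen small enough to force $\sqrt{\eta} < 1-\eta$ and $\delta(1-\sqrt{\eta}) > \eta$.

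The main obstacle is parameter balancing: the threshold $\eta_0$ has to reconcile three competing constraints---Burgess must reach across the full divisor range $d \le M^{\sqrt{\eta}}$ of the sieve, the sieve error has to assume the precise shape $\eta^{\eta^{-1/2}/4 - 1}M/\log M$ (which is what pins down $u = \eta^{-1/2}$), and the divisor remainder must stay below $M^{1-\eta}$. The first constraint is the binding one and is what dictates the explicit dependence of $\eta_0$ on $\varepsilon$; everything else reduces to routine sieve bookkeeping.
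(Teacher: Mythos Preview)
Your proposal is correct and follows essentially the same route as the paper: set up $\cP_\pm(\eta,M,q)$ as a sifted count, use Lemma~\ref{lem:Erat} and Corollary~\ref{lem:BurgSimpl} to obtain the density $X=\tfrac{\varphi(q)}{2q}M$ with remainders $R_d\ll q^{o(1)}+(M/d)^{1-\delta}$, apply Lemma~\ref{lem:FL} with $y=M^\eta$ and $u=\eta^{-1/2}$, and then reshape the main term via the identity for $\tfrac{\varphi(q)}{q}\prod_{p\le M^\eta,\,p\nmid q}(1-1/p)$ together with Mertens. The only cosmetic difference is that the paper phrases the Burgess range as $d\le q^{\varepsilon/2}$ and sets $\eta_0=\delta^2/4$, whereas you phrase it as $M/d\ge q^{1/3+\varepsilon'}$; these are equivalent constraints.
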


\begin{proof} We see from Corollary~\ref{lem:BurgSimpl} and Lemma~\ref{lem:Erat}
that for any positive integer $d< q^{\varepsilon/2}$ with $\gcd(d,q)=1$ we have
\begin{equation}
\begin{split}
\label{eq:Ad}
\# \left\{1 \le m \le M~:~ d\mid m  \text{ and } \(\frac{m}{q}\) =\pm 1\right\}&\\
 = \frac{\varphi(q)}{2dq}M & + R(q,M,d) ,
 \end{split}
\end{equation}
where
\begin{equation}
\label{eq:RqMd}
R(q,M,d) =  O((M/d)^{-\delta})
\end{equation}
for some $\delta> 0$ depending only on $\varepsilon$.

We now set $\eta_0 = \delta^2/4$
and apply Lemma~\ref{lem:FL}
with  $u=\eta^{-1/2}$, $y = M^\eta$ and
$$
\omega(d) = \left\{  \begin{array}{ll}
1,& \text{if $\gcd(d,q)=1$};\\
0,& \text{if $\gcd(d,q)>1$}.
\end{array} \right.
$$
We also assume that $\eta$ is small enough so that
$$
y^u =  M^{\eta^{1/2}} \le q^{\varepsilon/2}
$$
so~\eqref{eq:Ad} applies to all positive integers $d \le y^u$.
This implies,
\begin{equation}
\label{eq:Prelim}
\left|\cP_\pm (\eta,M,q)  - \frac{\varphi(q)}{2q} M
\prod_{\substack{p\le M^\eta\\ p\nmid q}}\(1- \frac{1}{p}\)  \right| \le \Delta_1 + \Delta_2,
\end{equation}
where
$$
 \Delta_1 = C u^{-u/2}  \frac{\varphi(q)}{q} M
\prod_{\substack{p\le M^\eta\\ p\nmid q}}\(1- \frac{1}{p}\)
$$
for some absolute constant $C$,
%%IS and,
     and
$$
\Delta_2  \ll \sum_{d \le y^u}|R(q,M,d)|
$$
with $R(q,M,d)$ defined by~\eqref{eq:Ad}.

For $\Delta_1$, recalling the choice of $u$ and $y$, we derive
\begin{equation}
\label{eq:ET1}
 \Delta_1   \le   C \eta^{\eta^{-1/2}/4}  \frac{\varphi(q)}{q} M
\prod_{\substack{p\le M^\eta\\ p\nmid q}}\(1- \frac{1}{p}\) .
\end{equation}

For $\Delta_2$,  using~\eqref{eq:RqMd} and assuming
that $\eta \le \delta/2$,  we obtain
\begin{equation}
\label{eq:ET2}
\Delta_2 \ll \sum_{d \le y^u}  (M/d)^{1-\delta}
\ll  M^{1-\delta/2} \le M^{1-\eta}.
\end{equation}
We also note that
\begin{equation}
\label{eq:MT}
\begin{split}
 \frac{\varphi(q)}{q}
\prod_{\substack{p\le M^\eta\\ p\nmid q}}\(1- \frac{1}{p}\) &
= \prod_{\substack{p\le M^\eta}}\(1- \frac{1}{p}\)
    \prod_{\substack{p> M^\eta\\ p\mid q}}\(1- \frac{1}{p}\) \\
& = \(1+ O(M^{-\eta})\) \prod_{\substack{p\le M^\eta}}\(1- \frac{1}{p}\).
\end{split}
\end{equation}

Thus substituting~\eqref{eq:ET1}, \eqref{eq:ET2} and~\eqref{eq:MT} in~\eqref{eq:Prelim}
and recalling that by the Mertens formula, see~\cite[Section~I.1.6, Theorem~11]{Ten}, we have
$$
\prod_{p\le M^\eta}\(1- \frac{1}{p}\) = \frac{e^{-\gamma} + o(1)}{\eta \log M},
$$
where $\gamma= 0.57721\ldots$ is the Euler constant,
we conclude the proof.
\end{proof}

\begin{cor}
\label{cor:Char P}
 For any $\varepsilon > 0$ there exists some  $\eta_0> 0$ such that
for any positive $\eta < \eta_0$,  integers
$M \ge  q^{1/3+\varepsilon}$,
where $q\ge 2$ is not a perfect square, we have
$$
\left|\sum_{m \in \cP(\eta,M) } \(\frac{m}{q}\) \right|
\le
C_0 \eta^{\eta^{-1/2}/4-1} \frac{M}{\log M} +
O\(M^{1-\eta}\),
$$
where $C_0$  is an absolute constant.
\end{cor}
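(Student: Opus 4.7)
The plan is to deduce this directly from Lemma~\ref{lem:Ppm} by splitting the character sum according to the value of the Jacobi symbol.

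First I would observe that $\left(\frac{m}{q}\right) \in \{-1,0,1\}$ for every integer $m$, and that those $m\in \cP(\eta,M)$ with $\gcd(m,q) > 1$ contribute zero to the sum. Hence
$$
\sum_{m \in \cP(\eta,M)} \left(\frac{m}{q}\right) = \left|\cP_+(\eta,M,q)\right| - \left|\cP_-(\eta,M,q)\right|.
$$

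Next I would apply Lemma~\ref{lem:Ppm} twice, once with the $+$ sign and once with the $-$ sign, using the same $\eta_0$ furnished by that lemma for the given $\varepsilon$. Each of $|\cP_\pm(\eta,M,q)|$ then equals the common main term $\tfrac12 M \prod_{p\le M^\eta}(1 - 1/p)$ up to an error bounded by $C\eta^{\eta^{-1/2}/4 - 1}\frac{M}{\log M} + O(M^{1-\eta})$. Taking the difference, the main terms cancel and the triangle inequality on the two error terms yields
$$
\left|\sum_{m \in \cP(\eta,M)} \left(\frac{m}{q}\right)\right| \le 2C\,\eta^{\eta^{-1/2}/4 - 1}\frac{M}{\log M} + O(M^{1-\eta}),
$$
so setting $C_0 = 2C$ concludes the argument.

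There is no real obstacle here, since all the work has been done in Lemma~\ref{lem:Ppm}; the only thing to keep track of is that the same admissible range of $\eta$ (and the same hypotheses $M \ge q^{1/3+\varepsilon}$ and $q$ not a perfect square) propagate unchanged to the corollary.
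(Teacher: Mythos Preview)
Your proposal is correct and is exactly the intended derivation: the paper states this as an immediate corollary of Lemma~\ref{lem:Ppm} without giving a separate proof, and your argument of writing the sum as $|\cP_+(\eta,M,q)|-|\cP_-(\eta,M,q)|$, cancelling the common main term, and doubling the error is precisely how the corollary follows.
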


\section{Proof of Theorem~\ref{thm:dup}}

Let
$$
h = \min_{z \in [Q, 2Q]}  \psi(z).
$$
%%IS We consider the interval  $\cI = [u+1, u+h]$, <-comma?
     We consider the interval  $\cI = [u+1, u+h]$.
Without loss of generality we can assume that, say, $\psi(z) \le \log z$,
so that $h = o(Q)$.

Let us fix some arbitrary $\kappa > 0$, we show that
for all but at most $\kappa Q/\log Q$ primes $p\in [Q, 2Q]$
there is a
quadratic non-residue in $\cI$.

Let $\cN$ be an arbitrary set of integers $n \in \cI$ with either $n \equiv 1 \pmod 4$
or $n \equiv 3 \pmod 4$. So we observe that
\begin{equation}
\label{eq:n1n2}
n_1n_2 \equiv 1 \pmod 4, \qquad n_1,n_2 \in \cN.
\end{equation}

Consider the sum
$$
S = \sum_{p\in [Q, 2Q]} \left| \sum_{n \in \cN}
 \(\frac{n}{p}\)\right|^2
$$
of Legendre symbols.
Clearly, if $\cN$ consists of only quadratic residues (or zeros) modulo $p$
then
$$
 \sum_{n \in \cN}
 \(\frac{n}{p}\)\ge \# \cN - 1.
 $$
Thus
\begin{equation}
\label{eq:S/N}
\#\{p\in [Q, 2Q]~:~d_u(p) \ge h\} \le \frac{S}{(\# \cN-1)^2}.
\end{equation}

We now choose yet another real parameter $\eta> 0$.

 Expanding the summation from primes $p\in [Q, 2Q]$, 
%%IS squaring
     squaring and extending the summation 
to all integers $m \in \cP(\eta,M)$, we obtain
$$
S\le \sum_{m\in \cP(\eta,M)} \left| \sum_{n \in \cN}
 \(\frac{n}{m}\)\right|^2.
 $$

Squaring and changing the order of summation,
we obtain
$$
S\le \sum_{n_1,n_2 \in \cN}
 \sum_{m\in \cP(\eta,M)}
 \(\frac{n_1n_2}{m}\). $$
Finally, using~\eqref{eq:n1n2}, we derive
 $$
S\le
\sum_{n_1,n_2 \in \cN}
 \sum_{m\in \cP(\eta,M)}
 \(\frac{m}{n_1n_2}\).
$$
If $n_1n_2$ is not a perfect square,
we apply Corollary~\ref{cor:Char P} with
$$q=n_1n_2 \le (u+h)^2 \le 5Q^2
$$
(provided that $Q$ is large enough) to estimate the inner sum.
Otherwise, that is, when  $n_1n_2$ is  a perfect square,
we use the trivial bound $\# \cP(\eta,M)$ for the inner sum,
getting
$$
S \le T \# \cP(\eta,2Q) + h^2 \(C_0 \eta^{\eta^{-1/2}/4-1} \frac{Q}{\log(2Q)} +   O\(Q^{1-\eta}\)\),
$$
where $T$ is the number of products $n_1n_2$ with
$n_1,n_2 \in \cN$ that are perfect  squares.
Thus using~\eqref{eq:Card P}, we see from
we see from~\eqref{eq:S/N}
that
\begin{equation}
\begin{split}
\label{eq:bound}
&\#\{p\in [Q, 2Q]~:~d_u(p) \ge h\} \\
&\qquad  \le c_0  \frac{QT}{\eta  (\# \cN-1)^2 \log Q} \\
& \qquad\qquad +
 \frac{h^2} {(\# \cN-1)^2} \(C_0 \eta^{\eta^{-1/2}/4-1} \frac{Q}{\log(2Q)} +   O\(Q^{1-\eta}\)\),
\end{split}
\end{equation}

We now consider two different choices of the set $\cN$ depending on the relative size
of $u$ and $h$.

If $h \ge u^{1/2}/\log u$, we consider the sets of
$\cN_1$ and $\cN_3$ of square-free integers $n \in \cI$ with  $n \equiv 1 \pmod 4$
and $n \equiv 3 \pmod 4$ respectively. We now define $\cN$ as the
largest set out of $\cN_1$ and $\cN_3$.
We see from Corollary~\ref{cor:SF} that there are
$$
\# \cN_1 + \# \cN_3 \ge (A + o(1)) h.
$$
Hence $\# \cN \ge (A/2 + o(1)) h$. Clearly for two square-free integers $n_1$ and $n_2$
their product is a perfect square only if $n_1 = n_2$.
Hence, $T = \# \cN$ and we see from~\eqref{eq:S/N} and~\eqref {eq:bound}
that in this case
\begin{equation}
\begin{split}
\label{eq:card}
\#\{p\in [Q, 2Q]~:~d_u(p) \ge h\} &\\
\le  C_1 \eta^{-1} \frac{Q}{h \log Q}&  + C_2 \eta^{\eta^{-1/2}/4-1} \frac{Q}{\log Q}
+ C_3  Q^{1-\eta}
\end{split}
\end{equation}
for some absolute constants $C_1$, $C_2$, $C_3$.

We now assume that  $h < u^{1/2}/\log u$.
If  $n_1n_2 = m^2$ for an integer $m$ then,  writing $n_1 = k_1d$,
$n_2 = k_2d$, with $d = \gcd(n_1,n_2)$,
we see that
$$k_1 = m_1^2 \mand k_2  = m_2^2
$$
for some integers $m_1, m_2$.  Assume $m_1 < m_2$.
Thus
$$u/d \le m_1^2 < m_2^2 \le u/d + h/d.
$$
Therefore
$$
(u/d)^{1/2} \ll h/d
$$
or
$$
h \gg (du)^{1/2} \ge u^{1/2},
$$
which contradicts our choice of $h$. So  taking $\cN$ as the set of all integer $n \in \cI$
with $n \equiv 1 \pmod 4$ we see that  $T =\# \cN$ and we
obtain~\eqref{eq:card} again.

We not choose $\eta$ small enough to satisfy
$$
 C_2 \eta^{\eta^{-1/2}/4-1} \le \frac{1}{3}\kappa
$$
then we choose $Q$ large enough to satisfy
$$
 C_1 \eta^{-1}h^{-1}  \le \frac{1}{3}\kappa\mand
 C_3  Q^{1-\eta} \le \frac{1}{3}\kappa.
$$
With these parameters, we derive from~\eqref{eq:card}
that
$$
\#\{p\in [Q, 2Q]~:~d_u(p) \ge h\}  \le \kappa \frac{Q}{\log Q} .
$$
Since $\kappa>0$ is arbitrary, the result now follows.

\section{Comments}

Note that the  inequality $u \le 2Q$ in Theorem~\ref{thm:dup}
is a natural restriction with
respect to primes $p\in [Q, 2Q]$.
On the other hand,  it is also interesting to remove this condition.
It is easy to see that the limit $u \le 2Q$ in Theorem~\ref{thm:dup}
can be increased a little if one uses the full power of the Burgess bound.
In
fact it is easy to see that for quadratic characters only the square-free part
of the modulus $q$ matters so one can actually use  Lemma~\ref{lem:Burg} with
any integer $\nu \ge 1$, see~\cite[Theorem~12.6]{IwKow}.
However for large $u$ one needs some new ideas.

Furthermore,  obtaining a version of Theorem~\ref{thm:dup} with
an unlimited $u$ is essentially equivalent to estimating $d(p)$
for almost all primes $p$. Indeed, assume there are $N$ ``exceptional'' primes
$\ell_1, \ldots, \ell_N\in [Q, 2Q]$ with $d(\ell_i) \ge \psi(\ell_i)$,
$i=1,\ldots, N$, for some function $\psi(z)$.
%%IS This means 
     This means that
there are integers $u_i$ with
$$
d_{u_i}(\ell_i) \ge \psi(\ell_i), \qquad i=1,\ldots, N.
$$
%%IS Now 
     Let us
choose an integer $u$ satisfying
$$
u\equiv u_i \pmod{\ell_i},  \qquad i=1,\ldots, N.
$$
%%IS The we have
     Then we have
$$
d_{u}(\ell_i) = d_{u_i}(\ell_i) \ge \psi(\ell_i), \qquad i=1,\ldots, N.
$$
So a version of Theorem~\ref{thm:dup} with
an unlimited $u$ immediately implies an upper bound on $N$.

Similar questions are also interesting to study for the gaps
between primitive roots modulo $p$.

\section*{Acknowledgements}

The first author was
supported in part   by
%%IS added `the' twice
the 
Russian Fund for Basic Research, Grant N.~14-01-00332, and by
the 
Program Supporting
Leading Scientific Schools, Grant Nsh-3082.2014.1.

The second author would like to thank the Max Planck Institute for
Mathematics, Bonn, for support and hospitality during his work on this project.
The second author was also
supported in part by 
%%IS ARC grants DP110100628 and DP130100237.
Australian Research Council, Grants DP110100628 and DP130100237

\end{document}